\author{Lionel Nguyen Van Th\'e}
\address{Aix Marseille Univ, CNRS, Centrale Marseille, I2M UMR 7373, 13453 Marseille, France}
\email{lionel.nguyen-van-the@univ-amu.fr}
\thanks{This work has been partially supported by the GrupoLoco project (ANR-11-JS01-0008) funded by the French Government, managed by the French National Research Agency (ANR)}
\subjclass[2010]{Primary: 05D10 
; Secondary: 
03C15, 
54H20. 
}
\keywords{}
\date{May 2017}
\title[Ramsey degrees]{Finite Ramsey degrees and Fra\"iss\'e expansions with the Ramsey property}
\newcounter{nthm}
\newcounter{nthmm}
\newtheorem{thm*}[nthm]{Theorem}
\newtheorem{thm**}[nthmm]{Th\'eor\`eme}
\newtheorem*{thm***}{Theorem}
\newtheorem{defn}{Definition}
\newtheorem*{defn*}{Definition}
\newtheorem{prop*}[nthm]{Proposition}
\newtheorem{lemma}{Lemma}
\newcommand{\N}{\mathbb{N}}
\newcommand{\restrict}[2]{#1 \upharpoonright #2}
\newcommand{\m}[1]{\textbf{#1}}
\newcommand{\mc}[1]{\widetilde{\textbf{#1}}}
\newcommand{\Aut}{\mathrm{Aut}}
\newcommand{\Age}{\mathrm{Age}}
\begin{document}
\maketitle

\begin{abstract}
By a result of Zucker, every Fra\"iss\'e structure $\m F$ for which the elements of $\Age(\m F)$ have finite Ramsey degrees admits a Fra\"iss\'e precompact expansion $\m F^{*}$ whose age $\Age(\m F^{*})$ has the Ramsey property. While the original method uses dynamics in spaces of ultrafilters, the purpose of the present short note is to provide a different proof, based on classical tools from Fra\"iss\'e theory.  

\end{abstract}

\section{Introduction}

Recent works in structural Ramsey theory have largely been influenced by the paper \cite{Kechris05} of Kechris-Pestov-Todorcevic which exhibited a strong connection with topological dynamics. When studying it, it became apparent that of particular interest are those Fra\"iss\'e structures $\m F$ whose elements of $\Age(\m F)$ have a finite Ramsey degree. (For background and notations, see Section \ref{section:background}.) On the surface, this property is only a slight weakening of the now classical Ramsey property introduced by Ne\v set\v ril and R\"odl. However, practice suggests that it could be substantially less restrictive, and knowing exactly to which extent this is so remains open, despite the recent progress made on the model theoretic side (for example, Evans proved that $\omega$-categoricity of $\m F$ is not sufficient \cite{Evans15}), on the combinatorial side (see \cite{Hubicka16} by Hubi\v cka and Ne\v set\v ril, where the scope of the main technique to prove the Ramsey property is made broader still) and on the dynamical side (see \cite{Zucker16} by Zucker, \cite{Melleray16} by Melleray-Tsankov-Nguyen Van Th\'e and \cite{BenYaacov16c} by Ben Yaacov-Melleray-Tsankov). 

One method to make sure that a Fra\"iss\'e structure $\m F$ has finite Ramsey degrees is to construct a precompact expansion $\m F^{*}$ so that $\Age(\m F^{*})$ has the Ramsey property. It turns out that \emph{every} Fra\"iss\'e structure with finite Ramsey degrees arises that way: 

\begin{thm***}[Zucker \cite{Zucker16}]
\label{thm:main}
Let $\m F$ be a Fra\"iss\'e structure. TFAE: 
\begin{enumerate}
\item[i)] Every element of $\Age(\m F)$ has a finite Ramsey degree. 
\item[ii)] The structure $\m F$ admits a Fra\"iss\'e precompact expansion $\m F ^{*}$ whose age has the Ramsey property, the expansion property relative to $\Age(\m F)$, and is made of rigid elements. 
\end{enumerate} 
\end{thm***}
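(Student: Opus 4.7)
The plan is to address the two implications separately. For (ii) $\Rightarrow$ (i), a standard unpacking should suffice: given the precompact expansion $\m F^{*}$, list the finitely many $\Age(\m F^{*})$-expansions $\m A^{*}_{1},\dots, \m A^{*}_{k}$ of each $\m A\in \Age(\m F)$. The upper bound $t(\m A)\leq k$ follows by first enlarging $\m B \supseteq \m A$ via the expansion property to some $\m B^{\sharp}$ whose every $\Age(\m F^{*})$-expansion contains all $\m A^{*}_{i}$, then iterating Ramsey for each $\m A^{*}_{i}$ in turn to obtain the required $\m C$. The lower bound $t(\m A)\geq k$ comes from coloring copies of $\m A$ in $\m B$ by the expansion they inherit from a fixed $\m B^{*} \in \Age(\m F^{*})$, the expansion property ensuring that no monochromatic $\m B$-copy can omit a color.

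The real content is (i) $\Rightarrow$ (ii), where one must manufacture $\K^{*}$ from the purely numerical data $\m A\mapsto t(\m A)$. My plan is to build, for each $\m A\in \Age(\m F)$, a finite set $E(\m A)$ of size $t(\m A)$ of ``canonical expansion labels'', together with a restriction map $f^{*}: E(\m B)\to E(\m A)$ for each embedding $f:\m A\hookrightarrow \m B$ in $\Age(\m F)$. Each $E(\m A)$ should arise from the Ramsey-optimal partitions of $\binom{\m B}{\m A}$ into $t(\m A)$ classes for cofinally many $\m B \supseteq \m A$, quotiented by the equivalence identifying labels that are eventually merged in every larger $\m C$; a diagonal extraction inside $\prod_{\m A}[t(\m A)]$, organized along $\Age(\m F)$, should then provide a coherent global choice. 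With this in hand, $\K^{*}$ is defined as the class of pairs $(\m A,\sigma)$ with $\sigma\in E(\m A)$, and I would verify HP (automatic), JEP/AP (from the coherence), precompactness ($|E(\m A)|<\infty$), RP (any $l$-coloring of $(\m A,\sigma)$-copies in $(\m C,\sigma_{\m C})$ lifts to an $l \cdot t(\m A)$-coloring of $\m A$-copies in $\m C$, which the Ramsey degree reduces to $t(\m A)$ colors on some $\m B$, hence becomes monochromatic on the $\sigma$-fiber), and EP (from the minimality $|E(\m A)|=t(\m A)$). Rigidity is then arranged by further refining the expansion with an admissible linear order, following the classical construction.

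The substantive obstacle, by far, is constructing the coherent system $(E(\m A),f^{*})$ with the amalgamation property: the Ramsey degree supplies only a cardinality, and the delicate part is weaving the finite sets functorially into a Fra\"iss\'e class. This is precisely the step for which Zucker invoked ultrafilter dynamics on the space of expansions, so a proof staying inside classical Fra\"iss\'e theory must replace that extraction by a combinatorial one. My best guess for such a replacement is a compactness argument inside a profinite space of partition schemes, with amalgamation of the limit points reducing in turn to a joint application of the Ramsey degree bound for $\m A$ and the amalgamation property of $\K$: given $(\m B,\sigma_{\m B})$ and $(\m C,\sigma_{\m C})$ over a common $(\m A,\sigma_{\m A})$, first amalgamate $\m B$ and $\m C$ inside $\K$ to some $\m D$, then use the Ramsey degree for $\m A$ in a sufficiently large $\m E \supseteq \m D$ to locate a $t(\m A)$-partition of $\binom{\m D}{\m A}$ simultaneously compatible with $\sigma_{\m B}$ on $\binom{\m B}{\m A}$ and $\sigma_{\m C}$ on $\binom{\m C}{\m A}$. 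Turning this sketch into a proof that genuinely stays within classical Fra\"iss\'e theory is what I anticipate as the technical heart of the argument.
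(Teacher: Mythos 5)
Your outline of (ii) $\Rightarrow$ (i) is fine (the paper treats that direction as obvious), but the proposal for (i) $\Rightarrow$ (ii) is not a proof: it presupposes exactly the objects whose existence is the whole difficulty, and you say so yourself at the end. Concretely, your sets $E(\m A)$ of ``canonical expansion labels'' are to ``arise from the Ramsey-optimal partitions of $\binom{\m B}{\m A}$ into $t(\m A)$ classes'', but nothing in the hypothesis supplies such partitions: a finite Ramsey degree only says that every coloring can be reduced to at most $t(\m A)$ colors on some copy of $\m B$, not that there is a fixed partition of $\binom{\m B}{\m A}$ into at most $t(\m A)$ classes on which every coloring can be made constant class-by-class. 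Producing such a partition is the first nontrivial step of the paper's argument: a product-coloring trick due to K\v{r}\'i\v{z} (if $\m B$ is not $E$-$\m A$-Ramsey for any equivalence relation $E$ with at most $t$ classes, multiply the witnessing colorings together to defeat $t$-$\m A$-Ramseyness), followed by a K\"onig's lemma argument gluing these relations along an exhaustion of $\m F$ into a single equivalence relation $E_{\m A}$ on $\binom{\m F}{\m A}$. Your ``diagonal extraction inside $\prod_{\m A}[t(\m A)]$'' and ``profinite space of partition schemes'' gesture at this compactness step but give no mechanism for why limit points retain the Ramsey-stabilization property, and your proposed amalgamation of partition schemes via ``a sufficiently large $\m E\supseteq \m D$'' is not carried out.

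Two further steps in your plan are asserted with the wrong justification. First, the expansion property does not follow from the cardinality minimality $|E(\m A)|=t(\m A)$; in the paper it comes from choosing, in the compact space of expansions of $\m F$ (predicates for the $E_{\m A}$-classes together with a linear order, acted on by $\Aut(\m F)$), a point $(\vec P^{*},<^{*})$ with \emph{minimal orbit closure}, and invoking the characterization of the expansion property in terms of minimality from \cite{NVT13a}. Second, your derivation of the Ramsey property needs, besides the degree bound, precisely this expansion property plus the lower bound that every $\m A$ has at least $t(\m A)$ expansions in the new class (the paper's third lemma, proved using that each copy of $\m B$ is $E$-$\m A$-Ramsey for the induced relation); the counting argument you sketch is essentially \cite{NVT13a}*{Proposition 8}, but it cannot be run before those two facts are established. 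Finally, appending a linear order ``afterwards'' to force rigidity is not innocuous, since one must re-verify the Ramsey and expansion properties for the ordered class; the paper avoids this by carrying the order inside the compact space from the start. So the proposal identifies the right target (a precompact expansion with exactly as many expansions as the Ramsey degree, EP from minimality, RP by counting), but the technical heart — K\v{r}\'i\v{z}'s lemma, the K\"onig gluing, and the minimal-orbit-closure selection — is missing rather than replaced.
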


The original proof of this result uses topological dynamics in spaces of ultrafilters. The purpose of the present short note is to provide a different proof, based on classical tools from Fra\"iss\'e theory as well as some additional results from \cite{NVT13a}.


\section{Background and notation}

\label{section:background}

The purpose of this section is to introduce the terminology that will be used in order to show the main result of the paper. It is based on \cite{NVT13a}, which itself rests on \cite{Kechris05}. The reader will be assumed to have some familiarity with Fra\"iss\'e theory. Given a first order language $L$, the \emph{age} of a countable $L$-structure $\m F$ is the class $\Age(\m F)$ of all of its finite substructures (up to isomorphism). The structure $\m F$ itself is a \emph{Fra\"iss\'e structure} when it is countable, every finite subset of $\m F$ generates a finite substructure of $\m F$ ($\m F$ is \emph{locally finite}), and every isomorphism between finite substructures of $\m F$ extends to an automorphism of $\m F$ ($\m F$ is  \emph{ultrahomogeneous}). Given structures $\m A$, $\m B$, $\m C$ in that language, write $\m A \cong \m B$ when $\m A$ and $\m B$ are isomorphic, and define the set of all copies of $\m A$ in $\m C$ as 

$$ \binom{\m C}{\m A} := \{ \mc A \subset \m C : \mc A \cong \m A\}.$$  

Given a class $\mathcal K$ of $L$-structures, $\m A \in \mathcal K$ has a \emph{finite Ramsey degree} (in $\mathcal K$) when there exists an integer $t(\m A)$ such that for every $k\in \N$, every $\m B\in \mathcal K$, there exists $\m C\in \mathcal K$ such that for every map $\chi: \binom{\m C}{\m A}\rightarrow [k]:=\{0, 1,... , k-1\}$ (usually referred to as a $k$-\emph{coloring of} $\binom{\m C}{\m A}$) there is $\mc B\in \binom{\m C}{\m B}$ such that $\chi$ takes at most $t(\m A)$-many colors on $\binom{\mc B}{\m A}$. When $\mathcal K =\Age(\m F)$ for some Fra\"iss\'e structure $\m F$, this is equivalent to the fact that for every $k$-coloring $\chi$ of $\binom{\m F}{\m A}$, there is $\mc B\in \binom{\m F}{\m B}$ such that $\chi$ takes at most $t(\m A)$-many colors on $\binom{\mc B}{\m A}$. When in addition $t(\m A)=1$ for every $\m A\in \mathcal K$, $\mathcal K$ has the \emph{Ramsey property}.

Consider now an expansion $L^{*}$ of $L$ with at most countably many relational symbols. Say that an expansion $\mathcal K^{*}$ of $\mathcal K$ in $L^{*}$ is \emph{precompact} when any $\m A \in \mathcal K$ only has finitely many expansions in $\mathcal K^{*}$. Similarly, say that a Fra\"iss\'e structure $\m F^{*}$ is a precompact expansion of $\m F$ when $\Age(\m F^{*})$ is a precompact expansion of $\Age(\m F)$. Finally, say that $\mathcal K ^{*}$ satisfies the \emph{expansion property} relative to $\mathcal K$ if, for every $ \m A \in \mathcal{K}$, there exists $ \m B \in \mathcal{K}$ such that every expansion of $\m A$ in $\mathcal K^{*}$ embeds in every expansion of $\m B$ in $\mathcal K^{*}$.

\section{Proof of the main result}

\label{section:proof}

With the terminology of the previous section in mind, we are now ready to provide a proof of Zucker's theorem using classical techniques. The implication $ii)\Rightarrow i)$ is obvious so we concentrate on $i)\Rightarrow ii)$. The first step makes use of an ingenious technique due to K\v{r}\'i\v{z} in \cite{Kriz91}: 

\begin{defn}
Let $\m A, \m B \in \Age(\m F)$. 

For $t\in \N$, say that $\m B$ is \emph{$t$-$\m A$-Ramsey} when for every $k\in \N$, and every $k$-coloring $\chi$ of $\binom{\m F}{\m A}$, there exists $\mc B \in \binom{\m F}{\m B}$ such that $\chi$ takes at most $t$-many values on $\binom{\mc B}{\m A}$.   

For $E$ an equivalence relation on $\binom{\m B}{\m A}$, say that $\m B$ is \emph{$E$-$\m A$-Ramsey} when for every $k\in \N$, $k$-coloring $\chi$ of $\binom{\m F}{\m A}$, there exists an embedding $b : \m B\rightarrow \m F$ such that: $$\forall \mc A_{0}, \mc A_{1}\in \binom{\m B}{\m A} \quad \mc A_{0}E\mc A_{1} \Rightarrow \chi(b(\mc A_{0}))=\chi(b(\mc A_{1}))$$ 
\end{defn}

\begin{lemma}
\label{lemma:1}
Let $\m A, \m B \in \Age(\m F)$, $t\in \N$. Assume that $\m B$ is $t$-$\m A$-Ramsey. Then $\m B$ is $E$-$\m A$-Ramsey for some equivalence relation $E$ on $\binom{\m B}{\m A}$ with at most $t$-many classes. 
\end{lemma}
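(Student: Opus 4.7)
The plan is to use a standard product-coloring/pigeonhole trick. Set $N=|\binom{\m B}{\m A}|$, which is finite, and observe that the collection $\{E_1,\ldots,E_q\}$ of all equivalence relations on $\binom{\m B}{\m A}$ with at most $t$ classes is a finite set. I will argue by contradiction: assume that $\m B$ fails to be $E_i$-$\m A$-Ramsey for every $i\leq q$. Then for each $i$ there is an integer $k_i$ and a coloring $\chi_i:\binom{\m F}{\m A}\to [k_i]$ such that for every embedding $b:\m B\to \m F$ one can find a pair $\mc A_0 \mathrel{E_i} \mc A_1$ in $\binom{\m B}{\m A}$ with $\chi_i(b(\mc A_0))\neq \chi_i(b(\mc A_1))$.

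The key step is then to merge these finitely many bad colorings into a single one. Define the product coloring
$$\chi:\binom{\m F}{\m A}\longrightarrow \prod_{i=1}^{q}[k_i],\qquad \chi(\mc A)=(\chi_1(\mc A),\ldots,\chi_q(\mc A)).$$
This is a $k$-coloring with $k:=\prod_{i}k_i$. Apply the assumption that $\m B$ is $t$-$\m A$-Ramsey to $\chi$: there exists $\mc B_0\in \binom{\m F}{\m B}$ on which $\chi$ takes at most $t$ values. Choose an isomorphism $b_0:\m B\to \mc B_0$, viewed as an embedding $\m B\to \m F$.

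Now pull back the equivalence relation induced by $\chi$: set
$$\mc A_0 \mathrel{E_0} \mc A_1 \;\Longleftrightarrow\; \chi(b_0(\mc A_0))=\chi(b_0(\mc A_1))$$
for $\mc A_0,\mc A_1\in \binom{\m B}{\m A}$. By construction $E_0$ is an equivalence relation on $\binom{\m B}{\m A}$ with at most $t$ classes, so $E_0=E_{i_0}$ for some $i_0\leq q$. But whenever $\mc A_0 \mathrel{E_{i_0}} \mc A_1$, the full tuple $\chi(b_0(\mc A_0))$ agrees with $\chi(b_0(\mc A_1))$, so in particular the $i_0$-th coordinate agrees: $\chi_{i_0}(b_0(\mc A_0))=\chi_{i_0}(b_0(\mc A_1))$. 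This exhibits an embedding $b_0$ which witnesses that $\chi_{i_0}$ is $E_{i_0}$-preserving, contradicting the choice of $\chi_{i_0}$.

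The only delicate point is bookkeeping: one must keep straight that $E_0$, which is defined \emph{after} the copy $\mc B_0$ is chosen, can be matched back to a member of the pre-listed family $\{E_1,\ldots,E_q\}$, and that the original "bad" coloring $\chi_{i_0}$ was selected \emph{before} the application of $t$-$\m A$-Ramseyness. Since $\{E_1,\ldots,E_q\}$ enumerates \emph{all} possible equivalence relations with at most $t$ classes, this matching is automatic, and the rest of the argument is formal. No additional hypothesis beyond $t$-$\m A$-Ramseyness of $\m B$ is used, so the conclusion follows.
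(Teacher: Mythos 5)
Your proof is correct and is essentially the paper's argument: both rely on the product of the ``bad'' colorings indexed by the finitely many equivalence relations on $\binom{\m B}{\m A}$ with at most $t$ classes, and then compare the equivalence relation pulled back from a good copy $\mc B_0$ with that list. The only difference is presentational (you argue by contradiction after applying $t$-$\m A$-Ramseyness, while the paper proves the contrapositive by showing the product coloring takes at least $t+1$ values on every copy of $\m B$), so no further comment is needed.
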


\begin{proof}

Let $\mathcal E$ denote the (finite) set of all equivalence relations on $\binom{\m B}{\m A}$ with at most $t$-many classes. We assume that $\m B$ is not $E$-$\m A$-Ramsey for any $E\in \mathcal E$, and show that $\m B$ is not $t$-$\m A$-Ramsey. 

By assumption, for every $E\in \mathcal E$, there exists $k_{E}\in \N$ and $\chi_{E}:\binom{\m F}{\m A}\rightarrow [k_{E}]$ witnessing that  $\m B$ is not $E$-$\m A$-Ramsey, i.e. for every embedding $b : \m B\rightarrow \m F$, there exist $\mc A_{0}E\mc A_{1}$ in $\binom{\m B}{\m A}$ such that $\chi_{E}(b(\mc A_{0}))\neq \chi_{E}(b(\mc A_{1}))$. 

Let $\displaystyle \chi : \binom{\m F}{\m A}\rightarrow \prod_{E\in \mathcal E}[k_{E}]$ be the finite coloring defined by $\chi(\mc A)=(\chi_{E}(\mc A))_{E\in \mathcal E}$. 

We claim that for every $\mc B \in \binom{\m F}{\m B}$, $\chi$ takes at least $(t+1)$-many values on $\binom{\mc B}{\m A}$. Let $b : \m B \rightarrow \m F$ an embedding such that $\mc B=b(\m B)$. Define $E_{b}$ on $\binom{\m B}{\m A}$ by: $$ \mc A_{0}E_{b}\mc A_{1} \Leftrightarrow \chi(b(\mc A_{0}))=\chi(b(\mc A_{1}))$$ 

By construction of $\chi$, $E_{b}$ disagrees with each $E\in \mathcal E$ on at least one pair of elements of $\binom{\m B}{\m A}$. It follows that $E_{b} \notin \mathcal E$, as required. \end{proof}

Writing $t(\m A)$ the Ramsey degree in $\Age(\m F)$, it follows that every $\m B \in \Age(\m F)$ is $E_{\m A} ^{\m B}$-$\m A$-Ramsey for some equivalence relation $E_{\m A} ^{\m B}$ on $\binom{\m B}{\m A}$ with at most $t(\m A)$-many classes.   

\begin{lemma}
For every $\m A \in \Age(\m F)$, there is an equivalence relation $E_{\m A}$ on $\binom{\m F}{\m A}$ with at most $t(\m A)$-many classes such that every finite substructure $\m B$ of $\m F$ is $(\restrict{E_{\m A}}{B})$-$\m A$- Ramsey. (where $(\restrict{E_{\m A}}{B})$ denotes the restriction of $E_{\m A}$ to $B$.)
\end{lemma}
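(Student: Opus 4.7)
The plan is to produce the global equivalence relation $E_{\m A}$ by a compactness argument, starting from the local equivalence relations $E^{\m B}_{\m A}$ provided by the discussion following Lemma \ref{lemma:1}.

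First, equip the set $X := [t(\m A)]^{\binom{\m F}{\m A}}$ with the product topology; it is compact, and each $f \in X$ induces an equivalence relation $E_{f}$ on $\binom{\m F}{\m A}$ (namely, $\mc A_{0} E_{f} \mc A_{1}$ iff $f(\mc A_{0}) = f(\mc A_{1})$) with at most $t(\m A)$-many classes. For each finite substructure $\m B$ of $\m F$, I would set
$$X_{\m B} := \bigl\{ f \in X : \m B \text{ is } (\restrict{E_{f}}{B})\text{-}\m A\text{-Ramsey} \bigr\}.$$
The property defining $X_{\m B}$ depends only on the values of $f$ on the finite set $\binom{\m B}{\m A}$, so $X_{\m B}$ is the preimage of a finite subset of $[t(\m A)]^{\binom{\m B}{\m A}}$ under the restriction map, hence clopen in $X$. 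If I can show that the family $\{X_{\m B}\}_{\m B}$ has the finite intersection property, then by compactness any $f$ in $\bigcap_{\m B} X_{\m B}$ yields an $E_{\m A} := E_{f}$ with the required property.

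For the FIP, given finite substructures $\m B_{1}, \dots, \m B_{n}$ of $\m F$, I would let $\m C$ be the (finite, by local finiteness) substructure of $\m F$ generated by $\m B_{1} \cup \dots \cup \m B_{n}$. By the observation following Lemma \ref{lemma:1}, there is an equivalence relation $E^{\m C}_{\m A}$ on $\binom{\m C}{\m A}$ with at most $t(\m A)$-many classes such that $\m C$ is $E^{\m C}_{\m A}$-$\m A$-Ramsey; pick any function $g : \binom{\m C}{\m A} \to [t(\m A)]$ inducing it, and extend $g$ arbitrarily to $f \in X$. For each $i$, given a $k$-coloring $\chi$ of $\binom{\m F}{\m A}$, the $E^{\m C}_{\m A}$-$\m A$-Ramsey property of $\m C$ provides an embedding $c : \m C \to \m F$ on which $\chi$ is $E^{\m C}_{\m A}$-invariant; restricting to $c|_{B_{i}} : \m B_{i} \to \m F$ and noting that $\restrict{E_{f}}{B_{i}}$ is the restriction of $E^{\m C}_{\m A}$ to $\binom{\m B_{i}}{\m A}$ (since $\m B_{i} \subseteq \m C$), one sees that $c|_{B_{i}}$ witnesses $\m B_{i}$ being $(\restrict{E_{f}}{B_{i}})$-$\m A$-Ramsey. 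Thus $f \in \bigcap_{i} X_{\m B_{i}}$.

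The only genuinely non-routine point is this ``monotonicity'' step, i.e. checking that Ramsey-ness with respect to an equivalence relation on a larger structure $\m C$ passes down to substructures $\m B_{i} \subseteq \m C$ by restricting embeddings; everything else (clopenness of $X_{\m B}$, compactness of $X$, existence of $\m C$) is standard. Once this is in place, the compactness argument delivers $E_{\m A}$ with at most $t(\m A)$-many classes and the prescribed local Ramsey property.
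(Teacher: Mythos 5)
Your proposal is correct and is essentially the paper's argument in different clothing: the crucial ``monotonicity'' step (that $\m C$ being $E^{\m C}_{\m A}$-$\m A$-Ramsey passes to a substructure $\m B\subseteq \m C$ with the restricted relation, by restricting the witnessing embedding) is exactly the paper's observation that for $m\leq n$, $\m B_{m}$ is $(\restrict{E_{\m A}^{\m B_{n}}}{B_{m}})$-$\m A$-Ramsey. Your Tychonoff/finite-intersection argument on $[t(\m A)]^{\binom{\m F}{\m A}}$ is just a repackaging of the paper's K\"onig's-lemma argument on the finitely branching tree of compatible restrictions along an exhaustion of $\m F$.
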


\begin{proof}
Write $\m F =\{ x_{k} : k\in \N\}$ and let $\m B_{n}$ denote the substructure of $\m F$ supported by $\{ x_{k} : k\leq n\}$. By Lemma \ref{lemma:1}, each $\m B_{n}$ is $E_{\m A} ^{\m B_{n}}$-$\m A$-Ramsey for some equivalence relation $E_{\m A} ^{\m B_{n}}$ on $\binom{\m B_{n}}{\m A}$ with at most $t(\m A)$-many classes. Observe that if $m\leq n$, then $\m B_{m}$ is also $(\restrict{E_{\m A} ^{\m B_{n}}}{B_{m}})$-$\m A$- Ramsey. Thus, the set $\{ (\m B_{m}, \restrict{E_{\m A} ^{\m B_{n}}}{B_{m}})) : m\leq n\}$ becomes a finitely branching tree when equipped with the relation $$ (\m B, E)\leq (\m C, F)\Leftrightarrow \restrict{F}{\binom{\m B}{\m A}}=E$$ 

By K\"onig's lemma, this tree admits an infinite branch, whose union is of the form $(\m F, E_{\m A})$. The relation $E_{\m A}$ as required. \end{proof}

From that point on, the proof makes use of several results from \cite{NVT13a}. For each $\m A\in \Age(\m F)$, we now add predicates, $P_{\m A, 0},..., P_{\m A, t(\m A)-1}$, one for each equivalence class of $E_{\m A}$. Write $\vec P$ for the family $(P_{\m A, i})_{\m A, i}$ where $\m A$ ranges over $\Age(\m F)$ and $i<t(\m A)$. Note that every element of $\Age(\m F)$ only has finitely many expansions in $\Age(\m F, \vec P)$. Next, let $<$ be a linear ordering on $\m F$, and consider the logic action of $\Aut(\m F)$ on the compact space $$\prod_{\stackrel{\m A\in \Age(\m F)}{i<t(\m A)}}[2]^{\m F ^{|\m A|}}\times \mathrm{LO}(\m F)$$

Pick $(\vec P ^{*}, <^{*})\in \overline{\Aut(\m F)\cdot (\vec P, <)}$ with minimal orbit closure in that space. Then, $<^{*}$ is a linear ordering on $\m F$, and writing $\vec P^{*}=(P^{*} _{\m A, i})_{\m A, i}$, the family $(P^{*}_{\m A, i})_{i<t(\m A)}$ is a partition of $\binom{\m F}{\m A}$ for every $\m A\in \Age(\m F)$. Let $\mathcal K^{*}:=\Age(\m F, \vec P^{*}, <^{*})$. 

\begin{lemma}
Let $\m A\in \Age(\m F)$. Then $\m A$ has at least $t(\m A)$-many expansions in $\mathcal K^{*}$. 
\end{lemma}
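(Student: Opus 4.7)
The plan is to show that $\m A$ has at least $t(\m A)$ non-isomorphic expansions in $\mathcal{K}^{*}$ by reducing to the statement that each predicate $P^{*}_{\m A, i}$ (for $i < t(\m A)$) is nonempty, and then proving nonemptiness by transferring the Ramsey property enjoyed by $E_{\m A}$ to the analogous equivalence relation induced by $\vec{P}^{*}$. The reduction is immediate: if $\mc A$ lies in $P^{*}_{\m A, i}$ and $\mc A'$ lies in $P^{*}_{\m A, j}$ with $i \neq j$, then the induced $L^{*}$-expansions $(\mc A, \restrict{\vec{P}^{*}}{\mc A}, \restrict{<^{*}}{\mc A})$ and $(\mc A', \restrict{\vec{P}^{*}}{\mc A'}, \restrict{<^{*}}{\mc A'})$ cannot be $L^{*}$-isomorphic, since the enumeration of $\m A$ sits in distinct predicates. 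Hence nonemptiness of all $t(\m A)$ predicates yields $t(\m A)$ nonisomorphic expansions of $\m A$ in $\mathcal{K}^{*}$.

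For the main step, let $E^{*}_{\m A}$ be the equivalence relation on $\binom{\m F}{\m A}$ whose classes are the nonempty $P^{*}_{\m A, i}$'s. I would show that every finite $\m B \subseteq \m F$ is $(\restrict{E^{*}_{\m A}}{B})$-$\m A$-Ramsey. Fixing such a $\m B$ and a $k$-coloring $\chi$ of $\binom{\m F}{\m A}$, use that $(\vec{P}^{*}, <^{*})$ lies in the orbit closure of $(\vec{P}, <)$ and that $B$ is finite to produce $\sigma \in \Aut(\m F)$ such that $\vec{P}^{*}$ agrees with $\sigma \cdot \vec{P}$ on all $|\m A|$-tuples from $B$. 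This forces $\restrict{E^{*}_{\m A}}{B}$ to coincide with the $\sigma$-image of $\restrict{E_{\m A}}{\sigma^{-1}(B)}$. Applying the $(\restrict{E_{\m A}}{\sigma^{-1}(B)})$-$\m A$-Ramsey property of the substructure $\sigma^{-1}(\m B) \subseteq \m F$ (granted by the previous lemma) to the coloring $\chi$, one obtains an embedding $b' : \sigma^{-1}(\m B) \to \m F$ which is color-constant on $\restrict{E_{\m A}}{\sigma^{-1}(B)}$-classes; then $b := b' \circ \restrict{\sigma^{-1}}{\m B}$ witnesses $(\restrict{E^{*}_{\m A}}{B})$-$\m A$-Ramseyness for $\m B$.

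Once every finite $\m B$ is $(\restrict{E^{*}_{\m A}}{B})$-$\m A$-Ramsey, the number of classes of $E^{*}_{\m A}$ upper bounds the Ramsey degree of $\m A$ in $\Age(\m F)$; by minimality of $t(\m A)$, this number is at least $t(\m A)$, and since the partition $(P^{*}_{\m A, i})_{i<t(\m A)}$ has at most $t(\m A)$ pieces, each $P^{*}_{\m A, i}$ must be nonempty. I expect the main obstacle to be the careful bookkeeping in the transfer step, namely checking that $\restrict{E^{*}_{\m A}}{B}$ really coincides with the $\sigma$-image of $\restrict{E_{\m A}}{\sigma^{-1}(B)}$ and that the composed map $b$ is an embedding of $\m B$ into $\m F$ with the desired color constancy. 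Notably, this argument only uses that $(\vec{P}^{*}, <^{*})$ lies in the orbit closure of $(\vec{P}, <)$; the minimality of its own orbit closure is presumably needed elsewhere in the paper.
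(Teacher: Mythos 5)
Your proof is correct, and its engine is the same as the paper's: transfer the $E_{\m A}$-Ramseyness supplied by the previous lemma to the equivalence relation induced by $\vec P^{*}$, using that $(\vec P^{*},<^{*})$ lies in the orbit closure of $(\vec P,<)$, and then invoke minimality of the Ramsey degree $t(\m A)$. The organization, however, genuinely differs in two ways. First, where the paper quotes Proposition 6 of \cite{NVT13a} to get $\Age(\m F,\vec P^{*})\subset \Age(\m F,\vec P)$ and then works inside a copy $\mc B$ with the induced relation, you prove the needed local agreement by hand, extracting $\sigma\in\Aut(\m F)$ from a basic open neighbourhood; this is essentially an inlined proof of that proposition (one wording caveat: since $\vec P$ may contain infinitely many $|\m A|$-ary predicates, you should request agreement only on the finitely many coordinates $(P_{\m A,i})_{i<t(\m A)}$ over tuples from $B$, which is all your argument uses and all that the product topology guarantees in one basic open set). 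Second, where the paper invokes minimality concretely, by fixing a witnessing $\m B$ and a coloring $\chi$ taking at least $t(\m A)$ values on every copy of $\m B$ and then counting classes inside a copy on which $\chi$ is class-constant, you argue globally: every finite $\m B$ is $(\restrict{E^{*}_{\m A}}{B})$-$\m A$-Ramsey, so the number of nonempty predicates $P^{*}_{\m A,i}$ is an admissible Ramsey bound for $\m A$, hence at least $t(\m A)$ by minimality, forcing all $t(\m A)$ predicates to be nonempty. This buys a slightly stronger conclusion (every $P^{*}_{\m A,i}$ is realized in $\m F$, not merely that $t(\m A)$ expansions occur) and avoids constructing an explicit bad coloring, at the cost of the bookkeeping you flag. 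One small point, common to both your reduction and the paper's phrasing: passing from ``copies in distinct predicates'' to ``non-isomorphic expansions'' uses that in the expansion induced on a copy $\mc A$, no $|\m A|$-tuple can satisfy $P^{*}_{\m A,j}$ for $j$ different from the class of $\mc A$, which holds because any such tuple would enumerate a copy of $\m A$ inside $\mc A$, i.e.\ $\mc A$ itself, contradicting that $(P^{*}_{\m A,i})_{i<t(\m A)}$ partitions $\binom{\m F}{\m A}$; this is worth a sentence but is not a gap. Your closing remark is also accurate: only orbit-closure membership is used here, minimality of the orbit closure being needed for the expansion property.
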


\begin{proof}
Let $\m B \in \Age(\m F)$ witness the fact that $t(\m A)$ is the Ramsey degree of $\m A$ in $\Age(\m F)$. This means that there is $\chi : \binom{\m F}{\m A}\rightarrow [k]$ taking at least $t(\m A)$-many values on $\binom{\mc B}{\m A}$ whenever $\mc B\in \binom{\m F}{\m B}$. Let $\mc B \in \binom{\m F}{\m B}$. Because $\vec P^{*}\in \overline{\Aut(\m F)\cdot \vec P}$, we have $\Age(\m F, \vec P^{*})\subset \Age(\m F, \vec P)$ (see \cite{NVT13a}*{Proposition 6}). It follows that if $E$ denotes the equivalence relation induced on $\binom{\mc B}{\m A}$ by $(P^{*}_{\m A, i})_{i<t(\m A)}$, then $\mc B$ is $E$-$\m A$-Ramsey. Hence, there is an embedding $b:\mc B\rightarrow \m F$ such that the value of $\chi(b(\mc A))$ depends only on the $E$-class of $\mc A$ in $\binom{\mc B}{\m A}$. By choice of $\chi$, it follows that $\binom{\mc B}{\m A}$ is made up of at least $t(\m A)$ many $E$-classes. In other words, at least $t(\m A)$-many expansions of $\m A$ appear in the substructure of $(\m F, \vec P)$ supported by $\mc B$. This is valid for every copy $\mc B$ of $\m B$ in $\m F$, so every expansion of $\m B$ in $\Age(\m F, \vec P)$ contains at least $t(\m A)$-many expansions of $\m A$. As a result, this is also the case for every expansion of $\m B$ in $\mathcal K^{*}$, and $\m A$ has at least $t(\m A)$-many expansions in $\mathcal K^{*}$. \end{proof}

Clearly, $\mathcal K^{*}$ has the hereditary property and the joint embedding property. It has the expansion property relative to $\Age(\m F)$ because $(\vec P ^{*}, <^{*})$ has minimal orbit closure (see \cite{NVT13a}*{Theorem 4}). By the previous lemma, every $\m A \in \Age(\m F)$ has a finite Ramsey degree in $\Age(\m F)$ whose value is at most the number of non-isomorphic expansions of $\m A$ in $\mathcal K^{*}$. According to \cite{NVT13a}*{Proposition 8}, these conditions guarantee that $\mathcal K^{*}$ has the Ramsey property, and is therefore a Fra\"iss\'e class. As such, it has a Fra\"iss\'e limit, which we write $\m F^{*}$. To finish the proof, it suffices to show that $\m F^{*}$ is an expansion of $\m F$.

\begin{lemma}
The structure $\m F^{*}$ is an expansion of $\m F$. 
\end{lemma}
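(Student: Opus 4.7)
The plan is to exploit the uniqueness clause of Fra\"iss\'e's theorem. It suffices to verify that the $L$-reduct $\m G := \m F^{*} \upharpoonright L$ is countable, locally finite, ultrahomogeneous, and has age $\Age(\m F)$; this will force $\m G \cong \m F$. Countability and local finiteness transfer immediately from $\m F^{*}$. The equality of ages is also immediate in both directions: every element of $\mathcal K^{*} = \Age(\m F, \vec P^{*}, <^{*})$ has its $L$-reduct in $\Age(\m F)$, and conversely every $\m A \in \Age(\m F)$ sits inside $(\m F, \vec P^{*}, <^{*})$ with some induced expansion, producing an element of $\mathcal K^{*}$ whose $L$-reduct is isomorphic to $\m A$; since this element embeds into $\m F^{*}$, we get $\m A \in \Age(\m G)$.

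For ultrahomogeneity, I would verify the extension property in $L$: given $\m A \subset \m B$ in $\Age(\m F)$ and an embedding $f : \m A \to \m G$, the task is to extend $f$ to an embedding of $\m B$. The image $f(\m A)$ carries a unique expansion $\m A^{*} \in \mathcal K^{*}$ inherited from $\m F^{*}$. The crucial intermediate claim is a \emph{reasonableness} property for $\mathcal K^{*}$: every expansion $\m A^{*}$ of $\m A$ in $\mathcal K^{*}$ extends to some expansion $\m B^{*}$ of $\m B$ in $\mathcal K^{*}$. Once this is in hand, the extension property of the Fra\"iss\'e limit $\m F^{*}$ in the enlarged language $L^{*}$ promotes $f$ to an embedding of $\m B^{*}$ into $\m F^{*}$, whose $L$-reduct is the required extension.

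To prove reasonableness, take any $\m A^{*}$ expanding $\m A$ in $\mathcal K^{*}$. By definition of $\mathcal K^{*} = \Age(\m F, \vec P^{*}, <^{*})$, there exists $\mc A \subset \m F$ together with an isomorphism $\iota : \m A \to \mc A$ such that the expansion induced on $\mc A$ by $(\vec P^{*}, <^{*})$ is $\iota$-isomorphic to $\m A^{*}$. Using ultrahomogeneity of $\m F$ in $L$, extend $\iota$ to an embedding $\bar\iota : \m B \to \m F$ with image $\mc B \supset \mc A$. The expansion of $\mc B$ induced by $(\vec P^{*}, <^{*})$, pulled back through $\bar\iota$, then yields the desired expansion $\m B^{*}$ of $\m B$ in $\mathcal K^{*}$ extending $\m A^{*}$; compatibility with $\m A^{*}$ is automatic because the symbols in $\vec P^{*}$ and the ordering $<^{*}$ are relational, so their restriction to $\mc A$ agrees with what they induce on $\mc A$ as a substructure of $\mc B$. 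No serious obstacle is anticipated: the whole argument reduces to ultrahomogeneity of $\m F$ together with the Fra\"iss\'e extension property of $\m F^{*}$, both of which are already in hand, and is in the spirit of the general theory of reasonable precompact expansions developed in \cite{NVT13a}.
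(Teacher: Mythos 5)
Your proposal is correct and follows essentially the same route as the paper: you reduce the statement to the reasonability of $\mathcal K^{*}$ (every expansion of $\m A$ in $\mathcal K^{*}$ extends along $\m A \subset \m B$), prove it by realizing $\m A^{*}$ inside $(\m F, \vec P^{*}, <^{*})$ and invoking ultrahomogeneity of $\m F$, and then use the extension property of $\m F^{*}$ to conclude that the $L$-reduct is Fra\"iss\'e with age $\Age(\m F)$, hence isomorphic to $\m F$. No gaps worth flagging.
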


\begin{proof}
It suffices to show that the reduct $\restrict{\m F^{*}}{L}$ of $\m F^{*}$ to the language $L$ of $\m F$ is Fra\"iss\'e. Indeed, $\Age(\restrict{\m F^{*}}{L})=\Age(\m F)$, so $\restrict{\m F^{*}}{L}=\m F$ will follow. 

To show that $\restrict{\m F^{*}}{L}$ is Fra\"iss\'e, following  \cite{Kechris05}*{5.2}, it suffices to show that $\mathcal K^{*}$ has the following \emph{reasonability} property: For every embedding $\pi : \m A \rightarrow \m B$ between elements of $\Age(\m F)$, and every expansion $\m A^{*}$ of $\m A$ in $\mathcal K^{*}$, there exists an expansion $\m B^{*}$ of $\m B$ in $\mathcal K^{*}$ so that $\pi$ induces an embedding from $\m A^{*}$ to $\m B^{*}$. 

Let us first check that this condition is sufficient. Let $\pi : \m A \rightarrow \m B$ be an inclusion embedding between finite substructures of $\restrict{\m F^{*}}{L}$, and $\varphi : \m A\rightarrow \restrict{\m F^{*}}{L}$ be an embedding. Then $\varphi(\m A)$ supports a substructure of $\m F^{*}$, and this can be pulled back to $\m A$ to define an expansion $\m A^{*}$ of $\m A$ in $\mathcal K^{*}$. By reasonability, expand $\m B$ to $\m B^{*}\in \mathcal K^{*}$ so that $\pi$ induces an embedding from $\m A^{*}$ to $\m B^{*}$, as required. Because $\m F^{*}$ is Fra\"iss\'e, $\pi$ extends to $\m B^{*}$, which induces $\phi:\m B \rightarrow \m F$ extending $\varphi$. 

We now show that the reasonability property holds. Let $\pi : \m A \rightarrow \m B$ be an embedding between elements of $\Age(\m F)$, and $\m A^{*}$ an expansion of $\m A$ in $\mathcal K^{*}$. Because $\mathcal K^{*}=\Age(\m F, \vec P^{*}, <^{*})$, we can find a substructure of $(\m F, \vec P^{*}, <^{*})$ isomorphic to $\m A^{*}$; call it $\m A_{0}^{*}$, and let $\varphi : \m A^{*}\rightarrow \m A^{*}_{0}$ be an isomorphism. Then $\varphi$ induces an isomorphism from $\m A$ to $\restrict{\m A^{*}_{0}}{L}$, and by ultrahomogeneity of $\m F$, there is $g\in \Aut(\m F)$ such that $\restrict{g}{A}=\varphi$. Then $g(\m B)$ supports in $(\m F, \vec P^{*}, <^{*})$ an expansion $\m B^{*}$ of $\m B$. Pulling this back to $\m B$ via $g$, we see that $\m B^{*}$ is as required. \end{proof}

\subsection*{Acknowledgements}

I am grateful to Andr\'as Pongr\'acz, whose sharp eye caught a gap in the first proof of the main result of this paper, and to Claude Laflamme and Andy Zucker, for their comments on the first draft. I would also like to express my gratitude to Jan Hubi\v cka and Jaroslav Ne\v set\v ril for their invitation to lecture at the Ramsey DocCourse and publish this work. 

\bibliography{Bib17May}
\end{document}